\newtheorem{thm}{Theorem}
\newtheorem{lem}[thm]{Lemma}
\theoremstyle{definition}
\theoremstyle{remark}
\newcommand{\Real}{\mathbb R}
\newcommand{\R}{\hbox{\ensuremath{\mathbb{R}}}}
\begin{document}
\title{On bounded distortions of maps in the line}
\author{Ignacio Garcia}
\address{Departamento de Matem\'atica (FCEyN-UNMDP), Mar del Plata, Argentina}
\email{nacholma@gmail.com}
\author{Carlos Gustavo Moreira}
\address{Instituto Nacional de Matem\'atica Pura e Aplicada, Estrada Dona Castorina 110, 22460-320, Rio de janeiro, Brasil}
\email{gugu@impa.br}
\begin{abstract}
We give an example illustrating that two notions of bounded
distortion for $\mathcal C^1$ expanding maps in $\R$ are different.
\end{abstract}
\maketitle
\section{Introduction and definitions}
Let $I_1$ and $I_2$ be disjoint closed intervals and let $F:I_1\cup
I_2\to [0,1]$ be a $\mathcal C^1$ map such that $F|_{I_i}$ is a
diffeomorphism on $[0,1]$ and $F'>1$ on its domain. The map $F$ has
associated a unique repeller $K$ given by
$K=\cap_{k\ge1}F^{-k}([0,1])$; the set $K$ is the maximal invariant
set under $F$ and is a Cantor set. Its Hausdorff and upper box
dimensions coincide, and they may be equal to $1$ (see \cite{PT},
Chapter $4$). More information on $K$ is obtained imposing
conditions on the map $F$. More precisely, let $\Omega_k$ be the set
of words of length $k$ with symbols $0$ and $1$, and note that the
$k$-th iterate $F^k$ is defined on the family of $2^k$ closed
intervals $\{I_\omega:\omega\in\Omega_k\}$, labeled from left to
right using the lexicographical order on $\Omega_k$. Note that the
restriction $F^k|_{I_\omega}$ is a diffeomorphism onto $[0,1]$. We
say that the map $F$ satisfies the {\em bounded distortion property}
{\bf BD} if there exists a constant $1\le C<\infty$ such that
\begin{equation*}
\frac{(F^k)'(x)}{(F^k)'(y)}\le C \ \ for \ all \ k>0,
\end{equation*}
and for all $x, y\in I_\omega$ and $\omega\in\Omega_k$. Moreover,
$F$ satisfies the {\em strong bounded distortion property} {\bf SBD}
if there is a sequence ${\beta_l}$ decreasing to $1$ such that
\begin{equation*}\label{SBD}
\frac{(F^k)'(x)}{(F^k)'(y)}\le \beta_r \ \ for \ all \ k>0,
\end{equation*}
whenever $x, y$ belong to the same basic interval $I_\omega, \omega
\in \Omega_k$ and $|F^k([x,y])|\le 1/r$, where $|A|$ denotes the diameter of the set $A$.

Clearly {\bf SBD} implies {\bf BD}. Moreover, it is well known that
if $F'$ is $\alpha$-H\"{o}lder continuous, then {\bf SBD} holds (see
for example \cite{PT}), and the same is true if the modulus of
continuity $w(t)=\sup_{|x-y|<t}|F'(x)-F'(y)|$ satisfies the Dini
condition  $\int_0^1w(t)t^{-1} dt$ (see \cite{FJ99}). Let $\dim_H K$
denotes the Hausdorff dimension of $K$. Property {\bf BD} implies
that $0<\dim_H K<1$, and also that the $\dim_H K$-dimensional
Hausdorff measure is positive and finite. Moreover, property {\bf
SBD} is needed, for example, to define the scaling function, which
is a $\mathcal C^1$ complete invariant for Cantor sets defined by
smooth maps: two such sets with the same scaling function are
diffeomorphic (see \cite{BeFi}).

However, although one suspects that {\bf SBD} is actually stronger
than {\bf BD}, we did not find in the literature an example
illustrating this fact. The purpose of this note is to provide such
an example.

\section{The example}

In order to construct $F$, we need a special family
$\{\varphi_t\}_{t\in[-1,1]}$ of smooth diffeomorphisms of the
interval $[0,1]$. For this reason, let $X$ be the $C^\infty$ field
on $[0,1]$ defined by $X(0)=X(1)=0$ and $X(x)=\exp((x(x-1))^{-1})$.
Consider its associated flow $\{\varphi_t\}_{t\in\Real}$ (see for
example \cite{Lang02}): for each $x\in[0,1]$ let $\tilde\phi(t,x)$
be the solution of the equation
$$\left\{\begin{matrix}
\frac{d}{dt}\phi(t,x)=X(\phi) \\
\hspace{-.4cm}\phi(0,x)=x
\end{matrix};\right.$$
then $\varphi_t=\tilde\phi(t,\cdot)$. Note $\tilde\phi\in\mathcal
C^\infty(\Real\times[0,1])$, which by the initial condition implies
$\varphi_t(0)=0$ and $\varphi_t(1)=1$ for all $t$. Below we list the
properties of $\{\varphi_t\}_{t\in[-1,1]}$ that we will use:
\begin{enumerate}[i)]
\item $\varphi_0(x)=x$ and $\varphi_t\circ\varphi_s=\varphi_{t+s}$, whenever $t, s, t+s\in[-1,1]$;
\item $\varphi_t'(0)=\varphi_t'(1)=1$, for all $t$;
\item $\|\varphi_t'-1\|_{u}\to0$ as $t\to0$;
\item $\phi_t'(x)\ge 2/3$, for all $x$ and $t\in[-T,T]$, for some $0<T\le1$;
\item there exists $M$ such that $\|\varphi_t''\|_u\le M$ for all $t \in [-1,1]$.
\end{enumerate}
Property i) is the semigroup property for flows; ii) follows from
the identity $(\partial/\partial t)(\partial/\partial
x)\tilde\phi(t,x)=X'(\tilde\phi(x,t))(\partial/\partial
x)\tilde\phi(t,x)$ and since $X'(0)=X'(1)=0$; iii) and v) are
consequence of the smoothness of $\tilde\phi$, while iv) follows
from iii).

For $n\ge0$, let $J_n=[2/3^{n+1},1/3^n]$ and denote by $A_n:J_n\to
[0,1]$ and $B_n:[0,1]\to J_{n-1}$ the affine maps
\begin{equation*}
A_n(x)=3^{n+1}x-2 \ \ and \ \ B_n(x)=\frac{x+2}{3^n}.
\end{equation*}
Note
\begin{equation}\label{eqid}
A_n\circ B_{n+1}=id_{[0,1]}.
\end{equation}
Also, for $2^k\le n<2^{k+1}$, let $t_n=(-1/2)^kT$. We define
$$F:[0,1/3]\cup[2/3,1]\to[0,1]$$ by
$$F(x)=\begin{cases}
                B_n\circ\varphi_{t_n}\circ A_n(x), & if \ x\in J_n, \ n\ge1 \\
                3x, & if \ x\in[0,1/3]\setminus\cup_{n\ge1}J_n \\
              3x-2, & if \ x\in [2/3,1]
        \end{cases}.    $$

Note that $F$ satisfies the diagram below.
%$$F|_{J_n}=B_n\circ\varphi_{t_n}\circ A_n,$$ that is $F$ satisfies the diagram
\begin{figure*}[ht]
\begin{center}
\begin{picture}(140,80)
\put(5,0){$[0,1]$} \put(30,3){\vector(1,0){80}}
\put(60,-5){$\varphi_{t_n}$} \put(115,0){$[0,1]$}
\put(15,50){\vector(0,-1){40}} \put(-1,28){$A_n$} \put(10,55){$J_n$}
\put(125,11){\vector(0,1){40}} \put(128,28){$B_n$}
\put(115,55){$J_{n-1}$} \put(30,58){\vector(1,0){80}}
\put(62,63){$F$}
\end{picture}
\end{center}
\end{figure*}
%For $x\in[0,1/3]\setminus\cup_{n\ge1}J_n$ we set $F(x)=3x$. Finally, $F|_{[2/3,1]}=A_0$.
\begin{lem}
The function $F$ is $\mathcal C^1$.
\end{lem}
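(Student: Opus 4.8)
The plan is to check that $F$ is $\mathcal{C}^1$ first on the interior of each piece of its definition, then at the junction points between adjacent pieces, and finally---this being the crux---at the accumulation point $0$.

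On the interior of each $J_n$ the map $F=B_n\circ\varphi_{t_n}\circ A_n$ is a composition of $\mathcal{C}^\infty$ maps, hence smooth, while on each complementary gap and on $(2/3,1)$ it is affine. Since $A_n$ and $B_n$ are affine with $A_n'=3^{n+1}$ and $B_n'=3^{-n}$, the chain rule gives, for $x$ in the interior of $J_n$,
\begin{equation*}
F'(x)=3\,\varphi_{t_n}'(A_n(x)).
\end{equation*}
In particular $F'$ exists and is continuous on the interior of every piece.

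Next I would treat the junction points. The endpoints of $J_n$ are sent by $A_n$ to $0$ and $1$. Using $\varphi_t(0)=0$ and $\varphi_t(1)=1$, the value of $F$ on $J_n$ at each endpoint agrees with the value $3x$ coming from the adjacent affine (gap) piece, so $F$ is continuous there. Using property ii), namely $\varphi_t'(0)=\varphi_t'(1)=1$, the displayed formula yields $F'=3$ at both endpoints of every $J_n$, which matches the constant derivative $3$ of the neighbouring affine pieces. Hence $F$ is $\mathcal{C}^1$ across each such junction (for $n=1$ the right endpoint $1/3$ is simply the boundary of the domain, and likewise $2/3$ is a boundary point, so nothing needs to be matched there).

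The main obstacle is the point $0$, where infinitely many intervals $J_n$ and gaps accumulate. Continuity at $0$ is clear: $F(0)=0$, and $F(J_n)=J_{n-1}$ shrinks to $\{0\}$ as $n\to\infty$. For the derivative, on every gap $F'\equiv3$, while on $J_n$ the formula gives $F'(x)=3\,\varphi_{t_n}'(A_n(x))$. Since $t_n=(-1/2)^kT$ for $2^k\le n<2^{k+1}$, we have $t_n\to0$ as $n\to\infty$, so property iii) yields $\|\varphi_{t_n}'-1\|_u\to0$ and therefore $F'(x)\to3$ as $x\to0^+$. As $F$ is continuous at $0$ and differentiable on a punctured right neighbourhood with $F'$ tending to $3$, the mean value theorem gives that the right derivative satisfies $F'(0)=3$ and that $F'$ is continuous at $0$. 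This completes the verification; the only delicate point is the accumulation at $0$, which is handled precisely by the decay $t_n\to0$ together with the uniform estimate in property iii).
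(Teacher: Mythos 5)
Your proposal is correct and follows essentially the same route as the paper: smoothness on the interior of each piece, matching values and derivatives (both equal to $3$, via property ii)) at the endpoints $1/3^n$ and $2/3^{n+1}$, and handling the accumulation point $0$ by showing $F'(x)\to 3$ as $x\to 0^+$ (using $t_n\to 0$ and property iii)) and invoking the mean value theorem to get the one-sided derivative at $0$. The paper applies the mean value theorem directly to the difference quotient $\bigl(F(h)-F(0)\bigr)/h$ rather than citing the standard corollary, but this is the same argument.
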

\begin{proof}
Clearly $F'$ exists and is continuous on
$\bigl((0,1/3]\setminus\cup_{n\ge1}J_n\bigr)\cup[2/3,1]$. The
existence and continuity of $F'$ at $1/3^n$ and $2/3^n$ follows
computing the left and right-sided derivatives and using ii): both
are equal to $3$. Moreover, for the right hand sided derivative at
$0$, if $h\in J_n$, then by the mean value theorem,
$$
\left|\frac{F(h)-F(0)}{h}-3\right| % &=\left|\frac{\varphi_{t_n}(3^{n+1}h-2)+2}{3^nh}-3\right|\\
=\left|3\varphi_{t_m}'(3^{m+1}\xi_h-2)-3\right|
=3\bigl|\varphi_{t_m}'(3^{m+1}\xi_h-2)-1\bigr|$$
%&\le \frac{3}{2}\|\varphi_{t_n}''\|_u3^{-n},
(for some $m \ge n$ and $\xi_h\in J_m$), which tends to $0$ by iii).
This implies the existence of $F'(0)$ (right sided), and the
continuity at $0$ also follows from iii).
\end{proof}
Given $\omega=\omega_1\ldots\omega_n$ and $\tau=\tau_1\ldots\tau_k$
we define
$\omega\tau=\omega=\omega_1\ldots\omega_n\tau_1\ldots\tau_k$. Also,
let $0^n, 1^n\in\Omega_n$ be the words formed only by zeroes and
ones, respectively.

We need two preliminary lemmas.
\begin{lem}\label{lemma2}Let $\tau\in\Omega_k$.
\begin{enumerate}[a)]
\item $I_{0^n1}=J_n$ for all $n>0$. In particular, $I_{0^n1\tau}\subset J_n$. Moreover, if $x\in I_{0^n1\tau}$ and $t=t_1+\cdots+t_n$, then $t\in [0,T]$ and
\begin{equation*}
(F^{n})'(x)=3^{n}\varphi_t'(A_{n}(x)).
\end{equation*}
\item $I_{1^n\tau}\subset J_0$ for all $n>0$. Moreover, if $x\in I_{1^n\tau}$, then
\begin{equation*}
(F^n)'(x)=3^n.
\end{equation*}
\end{enumerate}
\end{lem}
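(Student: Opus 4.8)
The plan is to treat the two parts separately, with a) carrying essentially all the work. For a), I first note that the two first-level intervals are $I_0=[0,1/3]$ and $I_1=[2/3,1]=J_0$, and that the restriction $F|_{I_0}$ is an increasing $\mathcal C^1$ bijection from $[0,1/3]$ onto $[0,1]$ which carries $J_n$ onto $J_{n-1}$ for every $n\ge1$ (indeed $F|_{J_n}=B_n\circ\varphi_{t_n}\circ A_n$ is a composition of the onto maps $A_n:J_n\to[0,1]$, $\varphi_{t_n}:[0,1]\to[0,1]$ and $B_n:[0,1]\to J_{n-1}$). Since $I_{0^n1}=(F|_{I_0})^{-1}(I_{0^{n-1}1})$ and $F|_{I_0}$ is injective, the identity $I_{0^n1}=J_n$ follows by induction on $n$, the base case $I_{01}=(F|_{I_0})^{-1}(J_0)=J_1$ being the instance $n=1$ of the preimage statement. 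The inclusion $I_{0^n1\tau}\subset I_{0^n1}=J_n$ is then immediate.

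For the derivative formula I would iterate $F$ along the orbit of a point $x\in J_n$, which visits $J_n,J_{n-1},\dots,J_0$ in succession. The crucial cancellation is the telescoping identity \eqref{eqid}: at each step $A_{n-j}\circ B_{n-j+1}=\mathrm{id}$ eliminates the affine pieces, while the semigroup property i) merges the flow times. A short induction then gives
\begin{equation*}
F^n|_{J_n}=B_1\circ\varphi_{t_1+\cdots+t_n}\circ A_n,
\end{equation*}
and the chain rule, using $B_1'\equiv1/3$ and $A_n'\equiv3^{n+1}$, yields $(F^n)'(x)=3^n\varphi_t'(A_n(x))$ with $t=t_1+\cdots+t_n$. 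To legitimise the use of i) at each merge one checks that all the partial sums involved lie in $[-1,1]$; the suffix sums appearing are differences of prefix sums, so this is subsumed by the bound proved next.

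It remains to show $t=t_1+\cdots+t_n\in[0,T]$. Substituting $t_n=(-1/2)^kT$ for $2^k\le n<2^{k+1}$, the contribution of a complete dyadic block $\{2^k,\dots,2^{k+1}-1\}$ is $2^k\cdot(-1/2)^kT=(-1)^kT$, so the cumulative sum at the end of block $k$ equals $T$ or $0$ according as $k$ is even or odd. Within each block the partial sums move monotonically, by equal increments of absolute value $T/2^k$, between the two extreme values $0$ and $T$; hence no partial sum ever leaves $[0,T]$. This is the one genuinely arithmetic point of the lemma and the place where the specific choice of the $t_n$ enters.

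Part b) is direct: on $I_1=J_0=[2/3,1]$ the map is the affine $x\mapsto 3x-2$, which sends $[2/3,1]$ onto $[0,1]$ with constant derivative $3$. Consequently $I_{1^n}\subset I_1=J_0$, so $I_{1^n\tau}\subset J_0$, and since the first $n$ iterates of any $x\in I_{1^n}$ all lie in $I_1$, where $F'\equiv3$, the chain rule gives $(F^n)'(x)=3^n$. The main obstacle throughout is the careful bookkeeping in the telescoping for a), rather than any single hard estimate.
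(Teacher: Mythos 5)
Your proposal is correct and follows essentially the same route as the paper: induction on the bijections $J_n\to J_{n-1}$ to get $I_{0^n1}=J_n$, telescoping via \eqref{eqid} and the semigroup property to obtain $F^n|_{J_n}=B_1\circ\varphi_{t_1+\cdots+t_n}\circ A_n$, the chain rule for the derivative formula, and the dyadic-block arithmetic for $t\in[0,T]$ (which the paper carries out by the same explicit partial-sum computation). Your added remark that the suffix sums are differences of prefix sums, hence lie in $[-T,T]$, neatly justifies the repeated use of property i), a point the paper leaves implicit.
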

\begin{proof}
We first notice that $\sum_{2^k \le
n<2^{k+1}}t_n=2^k\cdot(-1/2)^kT=(-1)^kT, \forall k\ge 0$, and so, if
$2^k \le n <2^{k+1}$, $k$ even, then
$t_1+\cdots+t_n=T-T+\dots+T-T+\sum_{2^k \le m \le
n}(-1/2)^kT=\sum_{2^k \le m \le n}(-1/2)^kT=\frac{n-2^k+1}{2^k}T\in
[0,T]$, and if $2^k \le n <2^{k+1}$, $k$ odd, then
$t_1+\cdots+t_n=T-T+\dots+T-T+T+\sum_{2^k \le m \le
n}(-1/2)^kT=T-\sum_{2^k \le m \le
n}(1/2)^kT=(1-\frac{n-2^k+1}{2^k})T\in [0,T]$.

By definition, $F$ is a bijection from $J_n$ to $J_{n-1}$.
Therefore, it can be shown inductively that $I_{0^n1}=J_n$, for all
$n>0$.
%Also, for $j>0$ we have by definition that
%\begin{equation}\label{eq1}
%J_{n+j}\overset{F}{\longrightarrow}J_{n+j-1}\overset{F}{\longrightarrow}\cdots\overset{F}{\longrightarrow} J_{n}.
%\end{equation}
In particular, if $x\in J_{n}$ we have by i) and \eqref{eqid} that
\begin{equation}\label{eq2}
F^{n}(x)=B_{1}\circ\varphi_{t}\circ A_{n}(x),%=3^{j}\varphi_t'(A_{n+j}(x)).
\end{equation}
and differentiating we obtain part $a)$. Part $b)$ is immediate from
the definition of $F$.
\end{proof}
The following is an estimate on the size of basic intervals.
\begin{lem}
For  each  $n\ge0$  and  $\tau\in\Omega_k$ we have
\begin{equation*}
|I_{0^{n}1\tau}|\le3^{-n+1}2^{-k-2}.
\end{equation*}
\end{lem}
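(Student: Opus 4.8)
The plan is to read off $|I_{0^{n}1\tau}|$ from the derivative of the full iterate $F^{n+1+k}$, which carries the level-$(n+1+k)$ interval $I_{0^{n}1\tau}$ diffeomorphically onto $[0,1]$. By the mean value theorem there is a point $\xi\in I_{0^{n}1\tau}$ with $|I_{0^{n}1\tau}|=\bigl((F^{n+1+k})'(\xi)\bigr)^{-1}$, so it suffices to bound $(F^{n+1+k})'(\xi)$ from below. I would split this derivative, via the chain rule, into three blocks matching the three pieces $0^{n}$, $1$ and $\tau$ of the word:
\begin{equation*}
(F^{n+1+k})'(\xi)=(F^{k})'\bigl(F^{n+1}(\xi)\bigr)\cdot F'\bigl(F^{n}(\xi)\bigr)\cdot (F^{n})'(\xi).
\end{equation*}

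The first preliminary step is to record the crude global bound $F'\ge 2$. This follows by inspecting the three branches of $F$: on $[0,1/3]\setminus\bigcup_{n\ge1} J_n$ and on $[2/3,1]$ the map is affine with slope $3$, while on each $J_n$ the chain rule gives $F'(x)=3\,\varphi_{t_n}'(A_n(x))\ge 3\cdot(2/3)=2$ by property iv). In particular the last block satisfies $(F^{k})'\bigl(F^{n+1}(\xi)\bigr)\ge 2^{k}$, being a product of $k$ factors each $\ge 2$ along the orbit, and the middle factor satisfies $F'\bigl(F^{n}(\xi)\bigr)\ge 2$.

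The heart of the estimate is the first block, where I must not settle for $2^{n}$ but genuinely extract $3^{n}$. Here Lemma~\ref{lemma2}a) does the work: since $\xi\in I_{0^{n}1\tau}\subset J_n$ and the relevant flow time $t=t_1+\dots+t_n$ lies in $[0,T]$, we have $(F^{n})'(\xi)=3^{n}\varphi_t'(A_n(\xi))\ge 3^{n}\cdot(2/3)=2\cdot 3^{n-1}$, again by property iv); for $n=0$ this reads $1\ge 2/3$ and holds trivially. Multiplying the three lower bounds gives $(F^{n+1+k})'(\xi)\ge (2\cdot 3^{n-1})\cdot 2\cdot 2^{k}=3^{n-1}2^{k+2}$, and inverting yields exactly $|I_{0^{n}1\tau}|\le 3^{-n+1}2^{-k-2}$.

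I expect the only real obstacle to be justifying the factor $3^{n-1}$ rather than the weaker $2^{n}$: the per-branch bound $F'\ge 2$ alone would produce $|I_{0^{n}1\tau}|\le 2^{-(n+1+k)}$, which already fails the target for $n\ge 2$, since $2^{-n+1}>3^{-n+1}$ there. Avoiding this loss is precisely why the combined formula of Lemma~\ref{lemma2}a)---obtained by telescoping through the semigroup property i) and the cancellation $A_n\circ B_{n+1}=\mathrm{id}$---is indispensable: it replaces the product of $n$ individual (and individually wasteful) factors by the single clean expression $3^{n}\varphi_t'(A_n(\xi))$, after which the uniform lower bound $\varphi_t'\ge 2/3$ from iv) finishes the job.
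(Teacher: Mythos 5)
Your proposal is correct and follows essentially the same route as the paper: invert the length via the mean value theorem, use Lemma~\ref{lemma2}a) together with property iv) to get the factor $\tfrac{2}{3}3^{n}$ from the first $n$ iterates, and bound the remaining $k+1$ derivative factors below by $2$ each. The only cosmetic difference is that you split off the middle factor $F'(F^{n}(\xi))$ separately, while the paper absorbs it into a single block $(F^{k+1})'(F^{n}(y))\ge 2^{k+1}$; the resulting bound $3^{n-1}2^{k+2}$ is identical.
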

\begin{proof}
Denote by $f_{0^n1\tau}$ the inverse of $F^{k+n}|_{I_{0^n1\tau}}$,
which is a diffeomorphism onto $[0,1]$. Then, for some $\xi\in(0,1)$
we have
$$|I_{0^{n}1\tau}|=f'_{0^{n}1\tau}(\xi)=\frac{1}{(F^{k+n+1})'(f_{0^{n}1\tau}(\xi))}.$$
Set $y:=f_{0^{n}1\tau}(\xi)\in I_{0^{n}1\tau}$. Then, by Lemma
\ref{lemma2} $a)$ and iv) we have
\begin{align*}
(F^{k+n+1})'(y)&=(F^{k+1})'(F^{n}(y))(F^n)'(y) \\
   &=3^n\varphi_{t_1+\cdots+t_n}'(A_n(y))(F^{k+1})'(F^{n}(y)) \\
   &\ge\frac{2}{3}3^n2^{k+1},
\end{align*}
and the lemma follows.
\end{proof}

Now we are ready to verify that $F$ satisfies {\bf BD} but not {\bf
SBD}.\vspace{.2cm}

\noindent {\em $F$ does not satisfies {\bf SBD}.} Let $\alpha, \beta
\in [0,1]$ be such that $\varphi_T'(\alpha)\neq\varphi_T'(\beta)$
(they exist since $\varphi_T \neq Id$). Observe that for each $k$
%in particular
%$$J_{2^{k+1}-1}\overset{F}{\rightarrow}J_{2^{k+1}-2}\overset{F}{\rightarrow}\ldots\overset{F}{\rightarrow} J_{2^{k}-1}.$$ Hence by $ii)$
we have
$$F^{2^k}|_{J_{{2^{k+1}-1}}}=B_{2^k}\circ\varphi_{(-1)^kT}\circ
A_{2^{k+1}-1}.$$ Then, if $k$ is even and if $x, y\in
J_{{2^{k+1}-1}}$ are such that $A_{2^{k+1}-1}(x)=\alpha$ and
$A_{2^{k+1}-1}(y)=\beta$, we obtain
\begin{equation*}
\frac{(F^{2^k})'(x)}{(F^{2^k})'(y)}=
\frac{\varphi'_{T}(A_{2^{k+1}-1}(x))}{\varphi'_{T}(A_{2^{k+1}-1}(y))}=
\frac{\varphi'_{T}(\alpha)}{\varphi'_{T}(\beta)}\neq 1,
\end{equation*}
whence {\bf SBD} does not hold; indeed,
$F^{2^k}(J_{2^{k+1}-1})=J_{2^k-1}$, whose size tends to $0$ when $k
\to \infty$.\vspace{.2cm}

\noindent {\em $F$ satisfies {\bf BD}.} Fix $k>0$ and
$\omega\in\Omega_k$ and let $x, y\in I_\omega$. We consider the
blocks of zeroes and ones of $\omega$, that is, there is an $L>0$
such that $\omega=0^{m_1}1^{n_1}0^{m_2}\cdots 0^{m_L}1^{n_L}$,
%there is a (unique) partition $m_1, n_1, m_2, n_2, \ldots, m_L, n_L$,
where $m_j, n_j>0$ for all $j$ but possibly $m_1=0$ or $n_L=0$ (the
case in which $\omega$ begins with $1$ or ends with $0$,
respectively).
%of $k$ induced by $\omega$ such that the first $m_1$ words in $\omega$ are $0$, the next $n_1$ words are $1$, the next $m_2$ are $0$, and so on. Here %$m_j, n_j>0$ for all $j$ but possibly $m_1=0$ or $n_L=0$.
We have $F^k(x)=F^{n_L}\circ F^{m_L}\circ\cdots\circ F^{n_1}\circ
F^{m_1}(x)$. Then for each $j$, $F^{m_j}$ is evaluated at a point
$x_j\in I_{0^{m_j}\tau_j}\subset J_{m_j}$, where
$|\tau_j|=n_j+\sum_{i=j+1}^L(m_i+n_i)$, hence by Lemma \ref{lemma2}
$a)$,
$$(F^{m_j})'(x_j)=3^{m_j}\varphi_{\ell_j}'(A_{m_j}(x_j))$$
for some $\ell_j \in [-T,T]$. Moreover, $F^{n_j}$ is evaluated at a
point $\tilde x_j\in I_{1^{n_j}\gamma}$, where
$|\gamma|=\sum_{i=j+1}^L(m_i+n_i)$, hence by Lemma \ref{lemma2}
$b)$,  $(F^{n_j})'(\tilde x_j)=3^{n_j}$.
%Then, by the chain rule and the mean value theorem we obtain
Therefore
\begin{align*}
\frac{(F^k)'(x)}{(F^k)'(y)}&=\prod_{j=1}^L\frac{(F^{m_j})'(x_j)}{(F^{m_j})'(y_j)}  \\
      &=\prod_{j=1}^L\frac{\varphi_{\ell_j}'(A_{m_j}(x_j))}{\varphi_{\ell_j}'(A_{m_j}(y_j))} \\
      &=\prod_{j=1}^L \left(1+\frac{\varphi_{\ell_j}'(A_{m_j}(x_j))-\varphi_{\ell_j}'(A_{m_j}(y_j))}{\varphi_{\ell_j}'(A_{m_j}(y_j))}\right) \\
     &=\prod_{j=1}^L  \left(1+\frac{\varphi_{\ell_j}''(\xi_j)}{\varphi_{\ell_j}'(A_{m_j}(y_j))}3^{m_j+1}(x_j-y_j)\right) \\
     &\le\prod_{j=1}^L\left(1+\frac{3^3M}{2^{|\tau_j|+2}}\right),
\end{align*}
the inequality follows from iv), v) and Lemma 3, since $|x_j-y_j|\le
|I_{0^{m_j}\tau_j}|$.
%Let $$\eta:=\frac{\sup_{t\in[-1,1]}\|\varphi_t''\|_u}{\inf_{t\in[-1,1]}\inf_{z\in[0,1]}\varphi_t'(z)}.$$ Note that $0<\eta<\infty$. Moreover, $|x_j-y_j|\le |I_{0^{m_j}\tau_j}|$, and by Lemma 3 we have
%\begin{align*}
%\frac{(F^k)'(x)}{(F^k)'(y)}\le\prod_{j=1}^L\left(1+\frac{9\eta}{2^{|\tau_j|}}\right),
%\end{align*}
The last product is uniformly bounded since
$\sum_{j=1}^L2^{-|\tau_j|}\le\sum_{i=0}^\infty2^{-i}<\infty$.
Therefore $F$ satisfies {\bf BD}.

\section*{Acknowledgments}I G is partially supported by CAI+D2009 $N^\circ$
62-310. C G M

%\bibliographystyle{alpha}
%\bibliography{refe}
\end{document}